\def\jb#1{\langle#1\rangle}
\def\norm#1{\|#1\|}
\def\normo#1{\left\|#1\right\|}
\newcommand{\F}{\mathcal{F}}
\newcommand{\cP}{\mathcal{P}}
\newcommand{\C}{\mathbb{C}}
\newcommand{\R}{\mathbb{R}}
\newcommand{\Z}{\mathbb{Z}}
\newcommand{\al}{\alpha}
\newcommand{\e}{\varepsilon}
\newcommand{\fy}{\varphi}
\newcommand{\om}{\omega}
\newcommand{\x}{\xi}
\newcommand{\y}{\eta}
\newcommand{\ro}{\rho}
\newcommand{\ft}{{\mathcal{F}}}
\newcommand{\De}{\Delta}
\newcommand{\Om}{\Omega}
\newcommand{\p}{\partial}
\newcommand{\na}{\nabla}
\newcommand{\re}{\mathop{\mathrm{Re}}}
\newcommand{\lec}{\lesssim}
\newcommand{\I}{\infty}
\newcommand{\ti}{\widetilde}
\newcommand{\LR}[1]{{\langle #1 \rangle}}
\newcommand{\EQ}[1]{\begin{equation}\begin{split} #1 \end{split}\end{equation}}
\newcommand{\Del}[1]{}
\newcommand{\CAS}[1]{\begin{cases} #1 \end{cases}}
\newcommand{\pt}{&}
\newcommand{\pr}{\\ &}
\newcommand{\pq}{\quad}
\newcommand{\pn}{}
\numberwithin{equation}{section}
\newtheorem{thm}{Theorem}[section]
\newtheorem{lem}[thm]{Lemma}
\theoremstyle{remark}
\newtheorem{rem}{Remark}
\begin{document}

\title[small energy scattering for Zakharov]{Small energy scattering for the Zakharov system with radial symmetry}
\author[Z. Guo, K. Nakanishi]{Zihua Guo$^{1,2}$, Kenji Nakanishi$^3$}

\address{$^1$LMAM, School of Mathematical Sciences, Peking
University, Beijing 100871, China}
\address{$^2$Beijing International Center for Mathematical Research, Beijing
100871, China}
\email{zihuaguo@math.pku.edu.cn}

\address{$^3$Department of Mathematics, Kyoto University, Kyoto 606-8502,
Japan}

\email{n-kenji@math.kyoto-u.ac.jp}

\begin{abstract}
We prove small energy scattering for the 3D Zakharov system with
radial symmetry. The main ingredients are normal form reduction and
the radial-improved Strichartz estimates.
\end{abstract}

\maketitle

\section{Introduction}
In this paper, we consider the Cauchy problem for the 3D Zakharov
system \EQ{\label{eq:Zak}
 \CAS{ i\dot u - \De u = nu,\\
   \ddot n/\al^2 - \De n = -\De|u|^2,}}
with the initial data
\begin{align}
u(0,x)=u_0,\, n(0,x)=n_0,\,\dot n(0,x)=n_1,
\end{align}
where $(u,n)(t,x):\R^{1+3}\to\C\times\R$, and $\al>0$ denotes the
ion sound speed. It preserves $\|u(t)\|_{L^2_x}$ and the energy \EQ{
 E=\int_{\R^3}|\na u|^2+\frac{|D^{-1}\dot n|^2/\al^2+|n|^2}{2}-n|u|^2 dx,}
where $D:=\sqrt{-\De}$, as well as the radial symmetry
\EQ{
 (u,n)(t,x)=(u,n)(t,|x|).}
We consider those solutions with such symmetry and finite energy,
hence \EQ{\label{eq:indata}
 (u_0,n_0,n_1)\in H^1_r(\R^3)\times L^2_r(\R^3)\times \dot H^{-1}_r(\R^3).}
We are interested in the scattering for small data in the above
function space, as it will open the possibility to discuss about
global dynamics of general large solutions (under the radial
symmetry).

This system \eqref{eq:Zak} was introduced by Zakharov \cite{Zak} as
a mathematical model for the Langmuir turbulence in unmagnetized
ionized plasma. It has been extensively studied. Local wellposedness
(without symmetry) is well known. For example, the well-posedness in
the energy space was proved in \cite{BoCo} for $d = 2, 3$ and in
\cite{GTV} for $d=1$. 
The improvement was obtained to the critical regularity in
\cite{GTV,BHHT} for $d=1,2$, and to the full subcritical regularity
in \cite{GTV,BeHe} for $d\geq 4,d=3$. The well-posedness for the
system on the torus was studied in \cite{Takaoka,Kishi}. These
results all follow from the iteration argument by using Bourgain's $X^{s,b}$ space. For
the subsonic limit to NLS (as $\alpha\to \infty$), uniform local wellposedness and convergence to a NLS solution has been obtained in 
\cite{SW,OT,KPV,MN}. Concerning the long time and blow-up behavior,
Merle \cite{Merle} obtained blow-up in finite or infinite time for
negative energy, and the scattering theory was studied in 
\cite{OT2,Shimo,GV} for prescribed asymptotic states (namely the final data problem) in weighted Sobolev spaces. 

The main difficulties for the scattering in the Sobolev space are derivative loss and slow dispersion of the 
wave equation together with the quadratic nonlinearity. Our idea is
to combine the normal form technique, which was first used in a
dispersive PDE context by Shatah \cite{Shatah}, and the improved
Strichartz estimate for radial functions that was obtained recently
in \cite{GuoWang}. The normal form transform was used in \cite{OTT} for the Klein-Gordon-Zakharov system (that is the system where the Schr\"odinger equation is replaced with the Klein-Gordon) and they got the scattering from initial data small in weighted Sobolev spaces without symmetry. Their estimates rely on Klainerman's vector field method, which is not directly applicable to the standard Zakharov system \eqref{eq:Zak} because of the mixed linear part. 

\begin{thm}\label{thm}
If $(u_0,n_0,n_1)$ are all radial and is small enough in the norm of
\eqref{eq:indata}, then the solution $(u,n)$ scatters in this space
as $t\to\pm\I$. More precisely, there are unique solutions $(u^\pm,n^\pm)$ of $0=i\dot u^\pm-\De u^\pm=\ddot n^\pm/\al^2-\De n^\pm$ satisfying 
\EQ{
 \lim_{t\to\pm\I}\left[\|u(t)-u^\pm(t)\|_{H^1} + \|n(t)-n^\pm(t)\|_{L^2}+\|\dot n(t)-\dot n^\pm(t)\|_{\dot H^{-1}}\right] = 0.}
\end{thm}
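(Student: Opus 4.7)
The plan is as follows. First, I would diagonalize the wave equation by introducing half-wave variables $N_\pm := n \pm i\al^{-1}D^{-1}\dot n$, so that the system becomes
\[
i\p_t u - \De u = \tfrac{1}{2}(N_+ + N_-)u, \qquad i\p_t N_\pm \mp \al D N_\pm = \mp \al D|u|^2,
\]
and the norm in \eqref{eq:indata} is equivalent to $u \in H^1_r$, $N_\pm \in L^2_r$. In this form the quadratic nonlinearity is explicit and the wave side still carries a full derivative. Plain Strichartz in the energy class is too weak to close any global estimate because of the slow quadratic decay of the 3D wave together with this derivative loss.

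To overcome this I would carry out a normal form reduction in the spirit of Shatah \cite{Shatah}. Integration by parts in time against the oscillation in the Duhamel formula trades each quadratic term for a bilinear boundary term with symbol $1/\Phi$, where $\Phi$ is the corresponding resonance function, plus a higher-order remainder. Concretely, I define bilinear Fourier multipliers $B_\pm(N_\pm, u)$ and $C_\pm(u,\bar u)$ such that the substitutions
\[
u = v + \sum_\pm B_\pm(N_\pm, v), \qquad N_\pm = M_\pm + C_\pm(v,\bar v)
\]
cancel the leading quadratic interactions, leaving a cubic-and-higher system for $(v, M_\pm)$. The relevant phase for the Schr\"odinger equation is
\[
\Phi^u_\pm(\x,\y) = |\x|^2 - |\x-\y|^2 \mp \al|\y|,
\]
and there is an analogous phase on the wave side.

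The main obstacle is that $\Phi^u_\pm$ vanishes on a codimension-one set (essentially $2\hat\y\cdot\x - |\y| = \pm\al$), so the bilinear symbol $1/\Phi^u_\pm$ is not bounded and needs to be resolved by hand. My plan is to decompose dyadically in frequency and in angular separation of $\x$ and $\y$, and to treat the near-resonant slices by a separate bilinear argument (bilinear Strichartz or direct oscillatory-integral estimates). The radial symmetry is decisive here: for radial profiles the angular distribution is isotropic, which is effectively a transversality condition against the resonant manifold and restores integrability. Quantitatively this is encoded by the radial-improved Strichartz estimates of \cite{GuoWang}, which reach non-admissible exponents, in particular $L^2_t$-type endpoints, for both the 3D Schr\"odinger and wave flows. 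These are exactly the bounds needed to tame the singular symbols and to absorb the derivative coming from $\al D|u|^2$.

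Equipped with these bilinear and induced trilinear estimates, I would set up a resolution norm combining $L^\infty_t$-energy control $u \in L^\infty_t H^1_r$, $N_\pm \in L^\infty_t L^2_r$ with spacetime Lebesgue norms at the radial Strichartz exponents, and run a contraction argument on the cubic system in $(v, M_\pm)$ for small data. Smallness passes between $(u, N_\pm)$ and $(v, M_\pm)$ because the normal-form transform is bilinear and hence locally invertible near zero. Global finiteness of this norm then yields the free asymptotic profiles: these are obtained first for $(v, M_\pm)$ by the standard strong-limit argument from time-integrable cubic Duhamel integrands, and then transferred back to $(u, n, \dot n)$ via the normal-form identity, with the bilinear correction going to zero as $t\to \pm\I$ because each factor individually tends to zero in the relevant Strichartz space. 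Uniqueness of $u^\pm, n^\pm$ follows from the same estimates.
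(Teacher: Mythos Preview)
Your overall architecture---diagonalize the wave part, apply a Shatah-type normal form, and close with the radial-improved Strichartz estimates of \cite{GuoWang}---matches the paper. The substantive discrepancy is in how you propose to handle the resonance, and there the plan as written has a gap.

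You intend to apply the normal form to \emph{all} quadratic interactions and then cope with the singularity of $1/\Phi$ on the resonant set by angular decomposition plus ad hoc bilinear/oscillatory estimates. This is both vague and unnecessary, and in the high--low regime it does not address the actual obstruction: with $|\x-\y|\ll|\y|\sim|\x|$ the phase in your notation is $\Phi\approx |\y|(\pm\al-|\y|)$, so the zero set is the frequency band $|\y|\sim\al$, a purely radial condition. Angular separation of $\x$ and $\y$ does not excise it, and ``isotropy of radial profiles'' gives no transversality against this band.

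The paper's key simplification is to split the quadratic terms by paraproduct \emph{before} deciding where to integrate by parts, and then apply the normal form only on the non-resonant high--low block. Write $Nu=(Nu)_{HL}+(Nu)_{LH}+(Nu)_{HH}$ and further $(Nu)_{HL}=(Nu)_{XL}+(Nu)_{\al L}$, where $XL$ means the high ($N$) frequency is away from $|\x|\sim\al$. On $XL$ one has $|\om|\sim|\x|^2+\al|\x|$ uniformly, so the normal-form symbol is a clean Coifman--Meyer multiplier with nothing singular to resolve. The remaining quadratic pieces $(Nu)_{LH+HH+\al L}$ (and symmetrically for $D|u|^2$) are left untouched. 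What makes this close is exactly the $L^2_t$-type radial Strichartz norms: H\"older $L^2_t\times L^2_t\to L^1_t$ estimates $(Nu)_{LH+HH}$ directly in $L^1_tH^1$ with $N\in L^2_t\dot B^{-1/4-\e}_{q(-\e)}$ and $\jb{D}u\in L^2_t\dot B^{1/4+\e}_{q(\e)}$, and the resonant band $(Nu)_{\al L}$ is harmless because it lives at a fixed output frequency $|\x|\sim\al$, so all derivative weights are $O(1)$. With this organization the whole argument is a short contraction in a single Strichartz-type norm; no near-resonant surgery on $1/\Phi$ is needed.
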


\begin{rem}
Global well-posedness (without symmetry) with small norm in the energy
space was proved in \cite{BoCo}. The novelty of Theorem \ref{thm} is
that the solution with radial symmetry also scatters. 
Our proof is much simpler than those analysis \cite{OT2,Shimo,GV,OTT} in weighted Sobolev spaces.  
\end{rem}

\section{Transform of equation}

This section is devoted to transform the equation by using the
normal form.  It is convenient first to change the system into first
order as usual. Let \EQ{
 N:=n - iD^{-1}\dot n/\al,}
then $n=\re N=(N+\bar N)/2$ and the equations for $(u,N)$ are
\EQ{\label{eq:Zak1}
 \CAS{ (i\p_t-\De) u =Nu/2+\bar N u/2,\\
   (i\p_t+\al D) N = \al D|u|^2.}}
One can easily see that 
\EQ{
 \|n\|_{L^2}^2 + \|D^{-1}\dot n\|_{L^2}^2/\al^2 = \|N\|_{L^2}^2,}
and a free wave $n(t)$ is transformed into a free wave $N(t)=e^{it\al D}N(0)$. 
In our proof, the term $\bar N u$ makes no essential difference from 
$Nu$, and hence for simplicity, we assume the nonlinear term in
first equation of \eqref{eq:Zak1} is $Nu$.

Now we introduce some notations. We use $S(t),W_\alpha(t)$ to denote
the Schr\"odinger, wave semigroups:
\[S(t)\phi=\ft^{-1}e^{it|\xi|^2}\hat{\phi},\quad W_\alpha(t)\phi=\ft^{-1}e^{i\alpha t|\xi|}\hat{\phi}, \pq \hat\phi=\F\phi.\]
Let $\eta_0: \R\rightarrow [0, 1]$ denote an radial smooth function
supported in $\{|\xi|\leq 8/5\}$ and equal to $1$ in $\{|\xi|\leq
5/4\}$. For $k\in \Z$ let
$\chi_k(\xi)=\eta_0(\xi/2^k)-\eta_0(\xi/2^{k-1})$ and $\chi_{\leq
k}(\xi)=\eta_0(\xi/2^k)$. For $k\in \Z$ let $P_k, P_{\leq k}$ denote
the operators on $L^2(\R^3)$ defined by
$\widehat{P_ku}(\xi)=\chi_k(|\xi|)\widehat{u}(\xi),\widehat{P_{\leq
k}u}(\xi)=\chi_{\leq k}(|\xi|)\widehat{u}(\xi)$.

For any pair of functions $u$ and $v$, we use $(uv)_{LH}$, $(uv)_{HH}$,
$(uv)_{HL}$ to denote the three different interactions
\[
 (uv)_{LH}:=\sum_{k\in\Z}P_{\leq k-5}uP_kv,(uv)_{HL}:=\sum_{k\in\Z}P_kuP_{\leq k-5}v,(uv)_{HH}:=\sum_{\substack{|k_1-k_2|\leq 4 \\ k_1,k_2\in\Z}}P_{k_1}uP_{k_2}v.\]  
To distinguish the resonant interaction, we also use 
\EQ{
 (uv)_{\al L}=\sum_{\substack{|k-\log_2\al|\le 1,\\ k\in\Z}}P_kuP_{\leq k-5}v,
 (uv)_{X L}=\sum_{\substack{|k-\log_2\al|> 1,\\ k\in\Z}}P_kuP_{\leq k-5}v,}
and similarly $(uv)_{L\al}$, $(uv)_{LX}$. 
It is obvious that we have 
\EQ{
 uv\pt=(uv)_{HH}+(uv)_{LH}+(uv)_{HL}
 \pr=(uv)_{HH}+(uv)_{L\al}+(uv)_{LX}+(uv)_{\al L}+(uv)_{XL},}
and they are all radial if $u,v$ are both radial. 
All these expressions should be understood as bilinear operators acting on $u$ and $v$, rather than operators on the product $uv$. 
More explicitly, for any such index $*=HH,HL,\al L$, etc., we denote the bilinear symbol (multiplier) by 
\EQ{
 \F(uv)_* = \int \cP_*\hat u(\x-\y)\hat v(\y)d\y.}
Finite sum of those bilinear operators are denoted by the sum of indices:
\EQ{
 (uv)_{*_1+*_2+\cdots}=(uv)_{*_1}+(uv)_{*_2}+\cdots.}

From Duhamel's formula and taking a Fourier transform, we get that
the first equation of \eqref{eq:Zak1} is equivalent to
\begin{align*}
\hat{u} &=e^{it|\xi|^2}\hat{u_0}-i\int_0^t
e^{i(t-s)|\xi|^2}\ft(Nu)_{XL} ds
 -i\int_0^t e^{i(t-s)|\xi|^2}\ft(Nu)_{HH+LH+\al L}ds\\
&:=I+II+III.
\end{align*}
Using the equation
\eqref{eq:Zak1} again, we get that
\begin{align}
\partial_t(e^{-it|\xi|^2}\hat{u})=&-ie^{-it|\xi|^2}(\hat{N}*\hat{u})(\xi),\\
\partial_t(e^{-i\alpha t|\xi|}\hat{N})=&-ie^{-i\alpha t|\xi|}\alpha |\xi|(\hat{u}*\hat{\bar
u})(\xi).
\end{align}
Thus we have
\begin{align*}
II=&-i\int_0^t e^{i(t-s)|\xi|^2}\cP_{XL} \hat{N}(s,\xi-\eta)\hat{u}(s,\eta)d\eta
ds\\
=&-ie^{it|\xi|^2}\int_0^t \int \cP_{XL} e^{is\om}[e^{-i\alpha s|\xi-\eta|}\hat{N}(s,\xi-\eta)][e^{-is|\eta|^2}\hat{u}(s,\eta)]d\eta ds,
\end{align*}
where the resonance function \EQ{
 \om:=-|\x|^2+\al|\x-\y|+|\y|^2}
in the support of $\cP_{XL}$: $\alpha \nsim |\xi-\eta|\sim |\xi|\gg |\eta|$, has the
following size \EQ{
 |\om| \sim |\x|^2+\al|\x|.}
This means roughly that for the non-resonant interactions in $\cP_{XL}$, we gain $|\x|^{-2}$ for high frequencies ($|\x|>1$), and lose $|\x|^{-1}$ for low frequencies ($|\x|<1$), by the time integration. The gain for high frequencies has been used for the local wellposedness in the $X^{s,b}$ spaces. In general, the lower frequencies can be more problematic in the scattering problems, but it will turn out that we can absorb $|\x|^{-1}$ by the Sobolev embedding.

Thus from integration by parts, we get
\begin{align*}
II=&-e^{it|\xi|^2}\int_0^t \int \cP_{XL}\om^{-1}\partial_s(e^{is\om })e^{-i\alpha
s|\xi-\eta|}\hat{N}(s,\xi-\eta)e^{-is|\eta|^2}\hat{u}(s,\eta)d\eta ds\\
=&-\int\cP_{XL}\om^{-1}[\hat{N}(t,\xi-\eta)\hat{u}(t,\eta)-e^{it|\xi|^2}\hat{N}(0,\xi-\eta)\hat{u}(0,\eta)]d\eta\\
&-\alpha\int_0^t \int \cP_{XL}\om^{-1}e^{i(t-s)|\xi|^2
}|\xi-\eta|\widehat{|u|^2}(\xi-\eta)\hat{u}(s,\eta)d\eta
ds\\
&-\frac{1}{2}\int_0^t \int e^{i(t-s)|\xi|^2 }\cP_{XL}\om^{-1}\hat{N}(s,\xi-\eta)(\hat{N}*\hat{u})(\eta)d\eta ds.
\end{align*}
We introduce a bilinear Fourier multiplier in the form 
\EQ{
 \Om(f,g)=\F^{-1}\int\cP_{XL}\om^{-1}\hat f(\x-\y)\hat g(\y)d\y.} 
Then we have
\begin{align*}
II=&-\ft\Om(N,u)(t)+e^{it|\xi|^2}\ft\Om(N,u)(0)\\
&-i\alpha\int_0^te^{i(t-s)|\xi|^2}\ft
\Om(D|u|^2,u)(s)ds-\frac{i}{2}\int_0^te^{i(t-s)|\xi|^2}\ft
\Om(N,Nu)(s)ds.
\end{align*}
Thus we obtain
\begin{align}\label{eq:intu}
u=&S(t)u_0+S(t)\Om(N,u)(0)-\Om(N,u)(t)-i\alpha\int_0^tS(t-s)\Om(D|u|^2,u)(s)ds\nonumber\\
&-\frac{i}{2}\int_0^tS(t-s)
\Om(N,Nu)(s)ds-i\int_0^tS(t-s)(Nu)_{LH+HH+\alpha L}ds.
\end{align}

For the second equation in \eqref{eq:Zak1}, similarly, we can apply
the normal form reduction for the high-low interaction, and then get
that it is equivalent to
\begin{align}\label{eq:intN}
N=&W_\alpha(t)N_0+W_\alpha(t)D\tilde\Om(u,u)(0)-D\tilde\Om(u,u)(t)-\int_0^tW_\alpha(t-s)D(u\bar
u)_{HH+\alpha L+L\al}ds\nonumber\\
&-\int_0^tW_\alpha(t-s)(D\tilde\Om(Nu,u)+D\tilde\Om(u,Nu))(s)ds,
\end{align}
where $\tilde \Om$ is a bilinear Fourier multiplier in the form \EQ{
 \tilde\Om(f,g)=\F^{-1}\int \cP_{XL+LX}\frac{\hat f(\x-\y)\hat{\bar g}(\y)}{|\xi-\eta|^2-|\eta|^2-\alpha|\xi|}d\y.}
It's easy to see that $\tilde \Om$ behaves "essentially" the same as
$\Om$. The transformed equations look "roughly" (neglecting the difference between $N$ and $\bar{N}$), 
\EQ{
 \pt(i\p_t+D^2)(u-\Om(N,u))=(Nu)_{LH+HH+\al L}+\Om(D|u|^2,u)+\Om(N,Nu),
 \pr(i\p_t+\al D)(N-D\tilde\Om(u,u))=D(u\bar{u})_{HH+\al L+L\al}+D\tilde \Om(Nu,u)+D\tilde\Om(u,Nu).}
In the normal form reduction, we didn't use the radial symmetry. We
also remark that this reduction doesn't destroy the symmetry.

\section{Strichartz estimates and nonlinear estimates}

In this section, we introduce the Strichartz norm we need. Because
of the quadratic term, our spaces rely heavily on the radial
symmetry. The homogeneous Besov and the inhomogeneous Sobolev spaces on $\R^3$ are defined respectively by
\EQ{
 \pt \|\fy\|_{\dot B^s_{p,q}}=\|2^{ks}P_k\fy\|_{\ell^q(\Z;L^p(\R^3))},
 \pq \|\fy\|_{H^s_p}=\|\LR{D}^s\fy\|_{L^p(\R^3)}.}
The $L^2$-Sobolev spaces are denoted by $\dot H^s=\dot B^s_{2,2}$, $H^s=H^s_2$. 
For $u$ and $N$, we use the radial-improved Strichartz
norms \EQ{ \label{Strz norms}
 \pt u \in \LR{D}^{-1}(L^\I_tL^2_x \cap L^2\dot B^{1/4+\e}_{q(\e),2}),
 \pr N \in L^\I_tL^2_x \cap L^2_t\dot B^{-1/4-\e}_{q(-\e),2},}
for fixed $0<\e\ll 1$, where $q(\cdot)$ is defined by \EQ{
 \frac{1}{q(\e)}=\frac{1}{4}+\frac{\e}{3},}
such that we have the Sobolev embedding
\EQ{ \label{Sobolev}
 \dot B^{s_1}_{q(s_1),\ro_1} \subset \dot B^{s_2}_{q(s_2),\ro_2} \pq (s_1\ge s_2, \ro_1\ge \ro_2).}
In particular, we have
\EQ{
 \dot H^1 = \dot B^1_{q(3/4),2} \subset \dot B^{1/4+\e}_{q(\e),2} \subset \dot B^{1/4-\e}_{q(-\e),2}\subset L^6.}
From now on, the third exponent of the Besov space will be fixed to
$2$ and so omitted. The condition $0<\e\ll 1$ ensures that \EQ{
 \frac{10}{3}<q(\e)<4<q(-\e)<\I,}
such that the norms in \eqref{Strz norms} are Strichartz-admissible
for radial solutions. The Strichartz estimates that we will use are
given in the following lemma, and refer to \cite{GuoWang} for their
proof as well as some preceding results. 

\begin{lem}\label{lem:radstri} Assume $\phi(x)$, $f(t,x)$ are spatially radially
symmetric in $\R^3$. Then

(1) if $(q,r),(\tilde q,\tilde r)$ both satisfy the
Schr\"odinger-admissible condition:
\[2\leq q \leq \infty, \frac{2}{q}+\frac{5}{r}<\frac{5}{2}; \mbox{ or } (q,r)=(\infty,2)\]
and $\tilde q>2$, then
\begin{align*}
\norm{S(t)\phi}_{L_t^q\dot{B}_{r,2}^{\frac{2}{q}+\frac{3}{r}-\frac{3}{2}}} &\lec
\norm{\phi}_{L^2},\\
\normo{\int_0^tS(t-s)f(s)ds}_{L_t^q\dot{B}_{r,2}^{\frac{2}{q}+\frac{3}{r}-\frac{3}{2}}} &\lec
\norm{f}_{L_t^{\tilde q'}\dot B_{\tilde
r',2}^{\frac{3}{2}-\frac{2}{\tilde q}-\frac{3}{\tilde r}}}.
\end{align*}

(2) if $(q,r),(\tilde q,\tilde r)$ both satisfy the wave-admissible
condition:
\[2\leq q \leq \infty, \frac{1}{q}+\frac{2}{r}<1; \mbox{ or } (q,r)=(\infty,2)\]
and $\tilde q>2$, then
\begin{align*}
\norm{W_\alpha(t)\phi}_{L_t^q\dot{B}_{r,2}^{\frac{1}{q}+\frac{3}{r}-\frac{3}{2}}}&\lec
\norm{\phi}_{L^2},\\
\normo{\int_0^tW_\alpha(t-s)f(s)ds}_{L_t^q\dot{B}_{r,2}^{\frac{1}{q}+\frac{3}{r}-\frac{3}{2}}}&\lec
\norm{f}_{L_t^{\tilde q'}\dot B_{\tilde
r',2}^{\frac{3}{2}-\frac{1}{\tilde q}-\frac{3}{\tilde r}}}.
\end{align*}
\end{lem}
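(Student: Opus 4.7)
The plan is to derive the extended admissibility range from refined radial dispersive estimates, and then apply the standard $TT^*$ and Hardy-Littlewood-Sobolev machinery. By the Littlewood-Paley decomposition and the scaling invariance of both $S(t)$ and $W_\al(t)$, it is enough to prove the frequency-localized versions of both estimates at unit frequency; the homogeneous Besov norm on the left-hand side with second index $2$ is then recovered through the usual square-function identity. The strict inequalities $2/q+5/r<5/2$ and $1/q+2/r<1$ in the statement leave a positive gap relative to the classical Strichartz admissibility conditions $2/q+3/r=3/2$ and $1/q+2/r=1/2$, and this gap is to be absorbed using the additional spatial decay enjoyed by radial solutions.

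The main step is a refined, radial dispersive bound. For a radial function $\fy$, the Fourier transform in $\R^3$ reduces to a one-dimensional oscillatory integral,
\EQ{
\hat\fy(\x)=\frac{4\pi}{|\x|}\int_0^\I \fy(r)\,r\sin(r|\x|)\,dr,
}
so the kernels of $e^{it\De}P_0$ and $e^{it\al D}P_0$ acting on radial data admit sharper pointwise decay than in the non-radial case. From this, one extracts for radial $\fy$ a bound of the shape $\norm{e^{it\De}P_0\fy}_{L^r_x}\lec \jb{t}^{-\gamma_r}\norm{\fy}_{L^{r'}_x}$ over an interval of $r$ strictly larger than the classical Strichartz range, with decay exponent $\gamma_r$ matching what the new admissibility condition demands; the analogous statement for $W_\al$ is proved in the same way. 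An equivalent route is to interpolate the classical Strichartz endpoint $(q,r)=(2,6)$ (respectively $(2,\I)$ for the wave equation with radial symmetry) with a radial Sobolev/Strauss embedding such as $\norm{|x|\fy}_{L^\I_x}\lec\norm{\fy}_{\dot H^1}$, which produces the required extra decay in a controlled way.

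Given this dispersive bound, both the homogeneous and the inhomogeneous estimates follow from the standard Keel-Tao $TT^*$ duality together with a Hardy-Littlewood-Sobolev inequality in time, now carried out on the enlarged range of $(q,r)$ and $(\ti q,\ti r)$; the strict inequalities in the admissibility condition and the restriction $\ti q>2$ keep us strictly away from the endpoint, so no further refinement is needed. The genuinely delicate ingredient is the sharp radial dispersive estimate and the uniformity of its constants in the Littlewood-Paley scale, so that the Besov norm reassembles cleanly on the left-hand side; this is precisely the content of \cite{GuoWang}, which we invoke here rather than reproving.
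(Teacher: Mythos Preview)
The paper does not prove this lemma at all: the text immediately preceding the statement reads ``refer to \cite{GuoWang} for their proof as well as some preceding results,'' and nothing further is said. Your proposal ends at the same place, invoking \cite{GuoWang} for the sharp radial dispersive estimate and the uniformity over Littlewood--Paley blocks; the three paragraphs you add beforehand are a correct informal outline of the method in that reference (scaling to unit frequency, refined $L^{r'}\to L^r$ dispersive decay for radial data, then $TT^*$ and Hardy--Littlewood--Sobolev off the endpoint), so the two are consistent and there is no discrepancy to flag.

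One small caution on the content of your sketch: the ``equivalent route'' you mention, interpolating the classical Strichartz endpoint with the Strauss inequality $\||x|\fy\|_{L^\I}\lec\|\fy\|_{\dot H^1}$, does not by itself produce the full open region $2/q+5/r<5/2$ (respectively $1/q+2/r<1$); it only gives a one-parameter family of weighted estimates. The actual argument in \cite{GuoWang} goes through a genuine frequency-localized radial dispersive bound (roughly $\|e^{it\De}P_0\fy\|_{L^r}\lec |t|^{-(3/2-3/r)}\|\fy\|_{L^{r'}}$ for radial $\fy$ and $r$ beyond the non-radial threshold), which is what you describe first. Since you ultimately cite the reference rather than relying on the interpolation remark, this does not affect the validity of your proposal.
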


We intend to apply this lemma to the integral equations. Then in
order to close the argument, we need to do some nonlinear estimates.

\subsection{Bilinear terms} The above Strichartz norms neatly fit in
the bilinear terms on the right, which are partially resonant.
Indeed we have
\begin{lem}\label{lem:bi1} (1) For any $N$ and $u$, the following estimates hold
\begin{align*}
 \|(Nu)_{LH}\|_{L^1_tH^1_x} &\lec \|N\|_{L^2_t\dot B^{-1/4-\e}_{q(-\e)}}\|\jb{D}u\|_{L^2_t \dot
 B^{1/4+\e}_{q(\e)}},\\
 \|(Nu)_{HH}\|_{L^1_tH^1_x} &\lec \|N\|_{L^2_t\dot B^{-1/4-\e}_{q(-\e)}}\|\jb{D}u\|_{L^2_t \dot
 B^{1/4+\e}_{q(\e)}}.
\end{align*}

(2) If $0\leq \theta\leq 1$, $\frac{1}{\tilde
q}=\frac{1}{2}-\frac{\theta}{2}$, $\frac{1}{\tilde
r}=\frac{1}{4}+\frac{\theta}{3}+\frac{\e}{3}$, then for any $N$ and
$u$
\begin{align*}
 \|(Nu)_{\alpha L}\|_{\LR{D}^{-1}L^{\tilde q'}_t \dot
B^{\frac{3}{2}-\frac{2}{\tilde q}-\frac{3}{\tilde r}}_{\tilde r'}}
\lec \|N\|_{L^2_t\dot B^{-1/4-\e}_{q(-\e)}}\|u\|_{L_t^\infty
L_x^2\cap L^2_t \dot B^{1/4+\e}_{q(\e)}}.
\end{align*}
\end{lem}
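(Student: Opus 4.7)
The plan is to prove all three inequalities by a unified strategy: apply H\"older in space with the conjugate pair $1/q(-\e)+1/q(\e)=1/2$, distribute dyadic weights so that the two Besov norms on the right reconstitute themselves, and close with H\"older in time.

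For the LH term, each summand $P_{\leq k-5}N\cdot P_k u$ has Fourier support in the annulus $|\x|\sim 2^k$, so Littlewood--Paley almost-orthogonality together with H\"older gives (at each fixed $t$)
\[
\|\jb{D}(Nu)_{LH}\|_{L^2_x}^2\lec \sum_{k\in\Z} \jb{2^k}^2\|P_{\leq k-5}N\|_{L^{q(-\e)}_x}^2\|P_k u\|_{L^{q(\e)}_x}^2.
\]
Since $-1/4-\e<0$, Cauchy--Schwarz on the low-frequency index yields $\|P_{\leq k-5}N\|_{L^{q(-\e)}}\lec 2^{k(1/4+\e)}\|N\|_{\dot B^{-1/4-\e}_{q(-\e)}}$, and pulling this factor out of the $k$-sum reconstitutes $\|\jb{D}u\|_{\dot B^{1/4+\e}_{q(\e)}}^2$. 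The HH piece is treated in the same spirit: we replace almost-orthogonality by the triangle inequality (the output frequency can now lie anywhere below $2^k$), apply H\"older with the same spatial exponents, and balance the weight $\jb{2^k}$ by Cauchy--Schwarz in $k$ with complementary weights $2^{\mp k(1/4+\e)}$. Integrating the resulting pointwise-in-$t$ bound by $L^2_t\cdot L^2_t\hookrightarrow L^1_t$ produces part (1).

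For part (2), the key point is that $(Nu)_{\al L}$ is Fourier-localised near $|\x|\sim\al$, so (up to $\al$-dependent constants) the Besov norm $\dot B^{-1/4-\e}_{\tilde r'}$ on the output is equivalent to $L^{\tilde r'}$ and $\jb{D}$ contributes only a bounded factor. An arithmetic check shows $1/\tilde r'-1/q(-\e)=1/2-\te/3=:1/b$ and $1/\tilde q'-1/2=\te/2=:1/c$, so the pair $(c,b)$ is exactly the complex interpolation point between $(\infty,2)$ at $\te=0$ and $(2,6)$ at $\te=1$. Thus by H\"older with $N$ in $L^2_tL^{q(-\e)}_x$ and $u$ in $L^c_tL^b_x$, combined with the Sobolev embedding $\dot B^{1/4+\e}_{q(\e)}\subset L^6$ already exhibited in the text, the $u$-factor is estimated by $\|u\|^{1-\te}_{L^\infty_tL^2_x}\|u\|^\te_{L^2_t\dot B^{1/4+\e}_{q(\e)}}$ via interpolation, while the $N$-factor is bounded by $\al^{1/4+\e}\|N\|_{L^2_t\dot B^{-1/4-\e}_{q(-\e)}}$ via the spectral-support constraint $|\x|\sim\al$.

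The only real obstacle is the bookkeeping of dyadic weights in part (1): one must place the operator $\jb{D}$ on the high-frequency input in both the LH and HH cases so that Cauchy--Schwarz in $k$ closes with matching powers of $2^k$, and it is precisely the negative Besov regularity of $N$ that renders the low-frequency projection $P_{\leq k-5}N$ summable with the correct growth $2^{k(1/4+\e)}$. Part (2) becomes essentially trivial once one exploits the fixed output frequency $\al$; the only substantive observation is that the exponents defining the Strichartz spaces have been engineered precisely so that the required interpolation between $(\infty,L^2)$ and $(2,L^6)$ lands on an admissible pair.
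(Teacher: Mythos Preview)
Your proof is correct and follows essentially the same route as the paper's: for part (1) you use the same H\"older split $1/q(-\e)+1/q(\e)=1/2$, the same almost-orthogonality for LH versus triangle inequality for HH, and the same Cauchy--Schwarz redistribution of the weight $2^{k(1/4+\e)}$; for part (2) you exploit, as the paper does, the fixed output frequency $|\x|\sim\al$ to reduce to $L^{\tilde q'}_tL^{\tilde r'}_x$ and then place $u$ in the interpolated space $L^{2/\te}_tL^{6/(3-2\te)}_x$ between $L^\I_tL^2_x$ and $L^2_tL^6_x$.
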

\begin{proof}
First we prove (1). For the first inequality, it suffices to prove
\[ \|(Nu)_{LH}\|_{H^1_x} \lec \|N\|_{\dot B^{-1/4-\e}_{q(-\e)}}\|\jb{D}u\|_{\dot B^{1/4+\e}_{q(\e)}}.\]
By dyadic decomposition, we have $(Nu)_{LH}=\sum_{k_1\leq
k_2-5}P_{k_1}NP_{k_2}u$. Then by H\"older inequality, we get
\begin{align*}
&\norm{\jb{D}(Nu)_{LH}}_{L^2}\\
&\leq (\sum_{k_2}|\sum_{k_1\leq k_2-5}(2^{k_2}+1)\norm{P_{k_1}N}_{q(-\e)}\norm{P_{k_2}u}_{q(\e)}|^2)^{1/2}\\
&\lec (\sum_{k_2}|\sum_{k_1\leq k_2-5}2^{k_1(\frac{1}{4}+\e)}2^{k_1(-\frac{1}{4}-\e)}\norm{P_{k_1}N}_{q(-\e)}(1+2^{k_2})\norm{P_{k_2}u}_{q(\e)}|^2)^{1/2}\\
&\lec\|N\|_{\dot B^{-1/4-\e}_{q(-\e)}}\|\jb{D}u\|_{\dot
B^{1/4+\e}_{q(\e)}}.
\end{align*}
Similarly, for the second one, we get
\begin{align*}
\norm{\jb{D}(Nu)_{HH}}_{L^2}&\leq \sum_{|k_1-k_2|\lec
4}\norm{P_{k_1}N}_{q(-\e)}(1+2^{k_2})\norm{P_{k_2}u}_{q(\e)}\\
&\lec\|N\|_{\dot B^{-1/4-\e}_{q(-\e)}}\|\jb{D}u\|_{\dot
B^{1/4+\e}_{q(\e)}}.
\end{align*}
For (2), as $(Nu)_{\alpha L}$ is supported in a fixed band
$|\x|\sim\al$, then we have \EQ{ \|(Nu)_{\alpha
L}\|_{\LR{D}^{-1}L^{\tilde q'}_t \dot B^{\frac{3}{2}-\frac{2}{\tilde
q}-\frac{3}{\tilde r}}_{\tilde r'}}
 \sim  \|(Nu)_{\alpha L}\|_{L^{\tilde q'}_t L^{\tilde r'}_x}
 \lec \|N_\alpha\|_{L^2_t L^{q(-\e)}_x}\|u_{<\al}\|_{L^{\frac{2}{\theta}}_t L^{\frac{6}{3-2\theta}}_x}.}
Thus (2) follows by Sobolev embedding and interpolation.
\end{proof}
\begin{rem}
In application, we will use Lemma \ref{lem:bi1} (2) by fixing a
$\theta_0$ such that $0<\theta_0<\frac{3}{8}-\frac{5\e}{2}$. By this
choice, $(\tilde q,\tilde r)$ is admissible to apply Lemma
\ref{lem:radstri} (1).
\end{rem}

\begin{lem}\label{lem:bi2} (1) For any $u$, we have
 \EQ{
 \|D(u\bar u)_{HH}\|_{L^1_tL^2_x} \lec \|u\|_{L^2_t\dot B^{1/4-\e}_{q(-\e)}}\|\jb{D}u\|_{L^2_t \dot B^{1/4+\e}_{q(\e)}}.}

(2) If $0\leq \theta\leq 1$, $\frac{1}{\tilde
q}=\frac{1}{2}-\frac{\theta}{2}$, $\frac{1}{\tilde
r}=\frac{1}{4}+\frac{\theta}{3}-\frac{\e}{3}$, then \EQ{
 \|D(u\bar u)_{\alpha L+L\al}\|_{L^{\tilde q'}_t \dot
B^{\frac{3}{2}-\frac{1}{\tilde q}-\frac{3}{\tilde r}}_{\tilde r'}}
\lec \|\jb{D}u\|_{L_t^\infty L_x^2\cap L^2_t \dot
B^{1/4+\e}_{q(\e)}}^2.}
\end{lem}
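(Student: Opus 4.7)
\medskip

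\noindent\textbf{Proof proposal.} The plan is to mimic the structure of Lemma \ref{lem:bi1}: treat the high-high interaction by a dyadic/Bernstein argument, and exploit the fact that the near-resonant piece $(u\bar u)_{\al L+L\al}$ is spectrally localized at frequency $\sim\al$, so that $D$ and the Besov weights just produce constants depending on $\al$.

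For part (1), I would dyadically decompose $D(u\bar u)_{HH}=\sum_{|k_1-k_2|\le 4}D(P_{k_1}u\,P_{k_2}\bar u)$. Since the product has Fourier support in $\{|\x|\lec 2^{k_1}\}$, Bernstein gives $\|D(P_{k_1}u\,P_{k_2}\bar u)\|_{L^2_x}\lec 2^{k_1}\|P_{k_1}u\,P_{k_2}\bar u\|_{L^2_x}$, and then H\"older with the exponent identity $1/q(-\e)+1/q(\e)=1/2$ gives $\lec 2^{k_1}\|P_{k_1}u\|_{q(-\e)}\|P_{k_2}u\|_{q(\e)}$. Taking $L^1_t$ and Cauchy--Schwarz in $t$ yields $L^2_t\cdot L^2_t$. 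To sum in $k_1\sim k_2$, I would split the weight $2^{k_1}=2^{k_1(1/4-\e)}\cdot 2^{k_1(3/4+\e)}$, apply Cauchy--Schwarz in $k$, and absorb $2^{k(3/4+\e)}$ into $2^{k(1/4+\e)}(1+2^{k})$ using the elementary bound $2^{k/2}\lec 1+2^k$ valid for all $k\in\Z$; this is exactly what converts the leftover $2^{k/2}$ into the $\jb{D}$ appearing in the Besov norm of the second factor.

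For part (2), note that $(u\bar u)_{\al L}$ has its high-frequency factor at $|\x|\sim\al$ and its low one at $\le\al/32$, and similarly for $L\al$, so the product is supported in $|\x|\sim\al$. Hence $D$ acts as multiplication by a constant of size $\al$, and the Besov norm $\dot B^{3/2-1/\tilde q-3/\tilde r}_{\tilde r'}$ collapses (up to a power of $\al$) to $L^{\tilde r'}$. With the chosen $(\tilde q,\tilde r)$, H\"older in space and time gives
\EQ{
 \|D(u\bar u)_{\al L+L\al}\|_{L^{\tilde q'}_t\dot B^{3/2-1/\tilde q-3/\tilde r}_{\tilde r'}}
 \lec_\al \|u_\al\|_{L^2_tL^{q(\e)}_x}\|u_{<\al}\|_{L^{2/\theta}_tL^{6/(3-2\theta)}_x},
}
with the exponent checks $1/\tilde q'=1/2+\theta/2$ and $1/\tilde r'=1/q(\e)+1/2-\theta/3$. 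The first factor is controlled by $\|u\|_{L^2_t\dot B^{1/4+\e}_{q(\e)}}$ (frequency-localization costs powers of $\al$, harmless for fixed $\al$). For the second factor, Sobolev embedding gives $L^{6/(3-2\theta)}_x\supset\dot H^\theta_x$, and then real interpolation between the two endpoint Strichartz norms $L^\infty_tL^2_x$ and $L^2_t\dot B^{1/4+\e}_{q(\e)}$ produces $L^{2/\theta}_t\dot H^\theta_x$ (both endpoints are admissible, and both are controlled by $\|\jb{D}u\|_{L^\infty_tL^2_x\cap L^2_t\dot B^{1/4+\e}_{q(\e)}}$), yielding the claimed quadratic bound.

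The main obstacle I anticipate is not a single hard estimate but rather the careful bookkeeping of derivative weights across low and high frequencies in (1)---in particular, verifying that the $2^{k/2}$ surplus from the $D$ can be absorbed by the inhomogeneous weight $\jb{D}$ uniformly in $k$---and in (2), matching the Sobolev/interpolation exponents exactly so that $(\tilde q,\tilde r)$ remains admissible for the Schr\"odinger estimate in Lemma \ref{lem:radstri}, as required when this lemma is subsequently fed into the fixed-point argument.
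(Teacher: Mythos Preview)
Your proposal is correct and follows essentially the same route as the paper, which simply declares the proof ``similar to that of the previous lemma'' and omits the details; your dyadic/Bernstein argument for (1) and the frequency-localization plus H\"older/interpolation argument for (2) are precisely those details. One minor slip in your closing remarks: $(\tilde q,\tilde r)$ here must be \emph{wave}-admissible (Lemma~\ref{lem:radstri}\,(2)), not Schr\"odinger-admissible, since Lemma~\ref{lem:bi2} is fed into the integral equation for $N$.
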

\begin{proof}The proof is similar to that of previous lemma. We omit
the details.
\end{proof}

\begin{rem}
In application, we will use Lemma \ref{lem:bi2} (2) by fixing a
$\theta_0$ such that $0<\theta_0<4\e$. By this choice, $(\tilde
q,\tilde r)$ is admissible to apply Lemma \ref{lem:radstri} (2).
\end{rem}

\subsection{Boundary terms}
Next, we estimate the boundary terms.
\begin{lem}
For any $N_0$ and $u_0$, we have
\begin{align*}
\|\Om(N_0,u_0)\|_{H^{1}_x} \lec \|N_0\|_{L^2_x}\|u_0\|_{H^1_x},\,
\|D\tilde\Om(u_0,u_0)\|_{L^2_x}\lec \|u_0\|_{H^1_x}\|u_0\|_{H^1_x}.
\end{align*}
As a consequence, for any $N$ and $u$
\begin{align*}
\|\Om(N,u)\|_{L_t^\I H^{1}_x} \lec \|N\|_{L_t^\I L^2_x}\|u\|_{L_t^\I
H^1_x},\, \|D\tilde\Om(u,u)\|_{L_t^\I L^2_x}\lec \|u\|_{L_t^\I
H^1_x}\|u\|_{L_t^\I H^1_x}.
\end{align*}
\end{lem}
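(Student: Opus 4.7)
The plan is to exploit the pointwise size estimate $|\om|\sim |\xi|(|\xi|+\al)$ on the support of $\cP_{XL}$ recorded in Section 2, together with the fact that $\cP_{XL}$ localizes the inputs so that $|\xi-\y|\sim|\xi|\gg|\y|$. A dyadic decomposition
\EQ{
 \Om(N_0,u_0)=\sum_{k\in\Z} P_k\Om(P_{\sim k}N_0,\,P_{\le k-5}u_0)
}
together with almost-orthogonality in the output frequency reduces the first bound to estimating each dyadic block in $L^2_x$. On the $k$th block the multiplier is essentially the scalar $[2^k(2^k+\al)]^{-1}$ (up to a harmless smooth factor of order one), so that the remaining work is a H\"older-Bernstein-Sobolev computation, carried out in two cases according to whether $2^k\gtrsim 1$ or $2^k\ll 1$.

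In the high-frequency regime $k\ge 0$ the weight $\jb{\xi}\sim 2^k$ combines with $|\om|^{-1}$ to give $\jb{\xi}|\om|^{-1}\lec(2^k+\al)^{-1}$. Applying H\"older with $L^3_x\cdot L^6_x$, the Bernstein bound $\|P_{\sim k}N_0\|_{L^3}\lec 2^{k/2}\|P_{\sim k}N_0\|_{L^2}$, and Sobolev $\|u_0\|_{L^6}\lec \|u_0\|_{H^1}$ yields
\EQ{
 \|\jb{D}P_k\Om(N_0,u_0)\|_{L^2} \lec \frac{2^{k/2}}{2^k+\al}\|P_{\sim k}N_0\|_{L^2}\|u_0\|_{H^1},
}
and Cauchy-Schwarz in $k$ closes the high-frequency sum. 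In the low-frequency regime $k<0$ the weight $\jb{\xi}\sim 1$ but $|\om|^{-1}\sim \al^{-1}2^{-k}$ is singular. Here I would flip the H\"older split to $L^6_x\cdot L^3_x$, using $\|P_{\sim k}N_0\|_{L^6}\lec 2^k\|P_{\sim k}N_0\|_{L^2}$ (which supplies the missing factor $2^k$) and $\|P_{\le k-5}u_0\|_{L^3}\lec 2^{k/2}\|u_0\|_{L^2}$ to get
\EQ{
 \|P_k\Om(N_0,u_0)\|_{L^2} \lec \frac{2^{k/2}}{\al}\|P_{\sim k}N_0\|_{L^2}\|u_0\|_{L^2},
}
which is square-summable over $k<0$. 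This is the Sobolev embedding that absorbs the $|\xi|^{-1}$ loss from $\om^{-1}$ that the paragraph after the size estimate for $\om$ promised.

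The bound for $D\ti\Om(u_0,u_0)$ runs in exactly the same way: on the support of $\cP_{XL+LX}$ the denominator $|\xi-\y|^2-|\y|^2-\al|\xi|$ also has size $|\xi|(|\xi|+\al)$ since $|\xi|\nsim\al$ there, and the outer $D$ plays the role of $\jb{D}$ above, so the same paraproduct split with both factors coming from $u_0$ (one at frequency $\sim 2^k$, the other at $\lec 2^{k-5}$) closes the bilinear estimate. The time-dependent consequences require no extra work: both bounds are pointwise in $t$, so taking $\sup_t$ on each side delivers the $L^\infty_t$ statements. The main obstacle is the low-frequency case: one has to calibrate the H\"older exponents so that the Bernstein gain on the high-frequency factor exactly offsets the $|\xi|^{-1}$ singularity of $\om^{-1}$ and still leaves a positive power of $2^k$ for summability. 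With the right split this is immediate; without it the low-frequency sum diverges, which is what forces the asymmetric roles played by $N_0$ and $u_0$ in the two regimes.
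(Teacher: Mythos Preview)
Your argument is correct, but the paper's proof is considerably shorter and proceeds entirely on the Fourier side. Rather than splitting into dyadic blocks and into high/low output frequency, the paper observes the single pointwise bound $\jb{\xi}\,|\om|^{-1}\lec |\y|^{-1}$ on the support of $\cP_{XL}$ (valid in both regimes, since $|\xi|\gg|\y|$), and then applies Young's convolution inequality $L^2_\xi*L^1_\y\hookrightarrow L^2_\xi$ followed by Cauchy--Schwarz $\||\y|^{-1}\hat u_0\|_{L^1}\le\||\y|^{-1}\LR{\y}^{-1}\|_{L^2}\|\LR{\y}\hat u_0\|_{L^2}$. The three-dimensionality enters only through the finiteness of $\||\y|^{-1}\LR{\y}^{-1}\|_{L^2(\R^3)}$, which packages both your high- and low-frequency cases at once. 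Your approach, by contrast, works in physical space via H\"older--Bernstein after a dyadic decomposition, and implicitly relies on a Coifman--Meyer type multiplier bound (the paper's Lemma~\ref{lem:bimult}) to pass from $\Om$ to a genuine product on each block; this is precisely the machinery the paper uses for the \emph{other} $\Om$-estimates (the non-$L^2$ Strichartz norms in the next lemma), so your route is more uniform with the rest of Section~3, while the paper's is quicker for this particular $L^2$-based bound.
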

\begin{proof}
We only prove $\|\Om(N_0,u_0)\|_{H^{1}_x} \lec
\|N_0\|_{L^2_x}\|u_0\|_{H^1_x}$, since the others are similar. 
By Plancherel, Young and Schwarz we have
\EQ{
\|\Om(N_0,u_0)\|_{H^{1}_x} \pt\lec \normo{\int_{|\xi-\eta|\gg|\eta|} |\eta|^{-1}|\hat{N_0}(\xi-\eta)|\cdot|\hat{u_0}(\eta)|d\eta}_{L^2_\x}
 \pr\le\|\hat N_0\|_{L^2_\x} \||\x|^{-1}\hat u_0\|_{L^1_\x}
 \pn\le \|\hat N_0\|_{L^2_\x} \||\x|^{-1}\LR{\x}^{-1}\|_{L^2_\x}\|\LR{\x}\hat u_0\|_{L^2_\x}
 \pr\lec \|N_0\|_{L^2_x}\|u_0\|_{H^1_x}.}
\end{proof}

To handle the other component, we will need the following
Coifman-Meyer type bilinear multiplier estimates:

\begin{lem}\label{lem:bimult}
Let $T_m$ be the bilinear operator on $\R^n(n\geq 1)$
\[T_m(f,g)(x)=\int_{\R^{2n}} m(\xi,\eta)\hat{f}(\xi)\hat{g}(\eta)e^{ix(\xi+\eta)}d\xi d\eta.\]
Assume $m$ is bounded and satisfies for all $\alpha,\beta$
\[|\partial_\xi^\alpha \partial_\eta^\beta m(\xi,\eta)|\leq C_{\alpha\beta}|\xi|^{-|\alpha|}|\eta|^{-|\beta|}.\]
If $1\leq p,q,r\leq \infty$ and $1/r=1/p+1/q$, then for any
$k_1,k_2\in \Z$ we have
\[\norm{T_m(P_{k_1}f,P_{k_2}g)}_{L^r}\leq C\norm{f}_{L^p}\norm{g}_{L^q}.\]
\end{lem}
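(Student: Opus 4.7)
The plan is to use a Fourier series expansion of the symbol, adapted to the dyadic frequency cutoffs, and thereby rewrite $T_m(P_{k_1}f,P_{k_2}g)$ as an absolutely convergent sum of translated pointwise products, to which H\"older's inequality applies.

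First I would reduce to the case $k_1=k_2=0$ by the scaling $(\x,\y)\mapsto(2^{k_1}\x,2^{k_2}\y)$. The Mihlin-type hypothesis on $m$ is scale invariant: the rescaled symbol $\tilde m(\x,\y):=m(2^{k_1}\x,2^{k_2}\y)$ satisfies $|\p_\x^\al \p_\y^\be \tilde m|\lec |\x|^{-|\al|}|\y|^{-|\be|}$ with constants independent of $k_1,k_2$. Since only the values of $\tilde m$ on $\{|\x|,|\y|\in[1/2,2]\}$ enter into $T_m(P_{k_1}f,P_{k_2}g)$ after rescaling, I would multiply $\tilde m$ by a smooth compactly supported cutoff $\tilde\chi(\x)\tilde\chi(\y)$ equal to $1$ on this region to obtain a symbol $\tilde m_0$ supported in, say, $\{|\x|,|\y|\in[1/4,4]\}$ with $\|\p^\al \tilde m_0\|_{L^\I}\lec 1$ uniformly in $k_1,k_2$.

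Next, on a large cube $Q=[-R,R]^{2n}$ containing the support of $\tilde m_0$, I would expand
\EQ{
\tilde m_0(\x,\y)=\sum_{j_1,j_2\in\Z^n}c_{j_1,j_2}\,e^{i\pi(j_1\cdot\x+j_2\cdot\y)/R},}
and repeated integration by parts together with the uniform bounds on the derivatives of $\tilde m_0$ yield the rapid decay $|c_{j_1,j_2}|\le C_M(1+|j_1|+|j_2|)^{-M}$ for every integer $M\ge 0$. Substituting this expansion into $T_m(P_{k_1}f,P_{k_2}g)$ and undoing the rescaling converts each Fourier mode in $(\x,\y)$ into a pair of spatial translations, giving a representation of the form
\EQ{
T_m(P_{k_1}f,P_{k_2}g)(x)=\sum_{j_1,j_2}c_{j_1,j_2}\,F_{k_1}(x+2^{-k_1}\pi j_1/R)\,G_{k_2}(x+2^{-k_2}\pi j_2/R),}
where $F_{k_1}=\widetilde P_{k_1}P_{k_1}f$ and $G_{k_2}=\widetilde P_{k_2}P_{k_2}g$, with fattened projections $\widetilde P_k$ absorbing the cutoffs $\tilde\chi$.

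Finally, each term is estimated by H\"older's inequality with $1/r=1/p+1/q$ combined with the translation invariance and the uniform $L^p$-boundedness of the Littlewood--Paley projections, yielding a bound of $C\|f\|_{L^p}\|g\|_{L^q}$ per term; the rapid decay of the Fourier coefficients then makes the sum absolutely convergent and gives the claim. The main technical point is the first step: confirming that the dyadic rescaling converts the original Mihlin-type estimates into uniform bounds on a fixed compact region, so that all subsequent constants are independent of $k_1,k_2$. Once that scale invariance is in place, the remaining Fourier-series argument is routine and insensitive to $n$, $p$, $q$, $r$.
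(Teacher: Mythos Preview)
Your argument is correct and is a standard alternative to the paper's method. The paper instead writes
\[
T_m(P_{k_1}f,P_{k_2}g)(x)=\int K(x-y,x-y')\,f(y)\,g(y')\,dy\,dy',
\]
where $K$ is the inverse Fourier transform of $m(\x,\y)\chi_{k_1}(\x)\chi_{k_2}(\y)$, and obtains by integration by parts the pointwise bound
\[
|K(x,y)|\lec 2^{k_1 n}(1+|2^{k_1}x|)^{-n-1}\,2^{k_2 n}(1+|2^{k_2}y|)^{-n-1},
\]
after which H\"older and Minkowski finish the job. Both proofs exploit the same smoothness of $m\chi_{k_1}\chi_{k_2}$ via integration by parts; the difference is only in the representation used---you decompose the symbol as an absolutely convergent Fourier series (reducing to translated pointwise products), whereas the paper bounds the physical-space kernel directly. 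The paper's route is a bit shorter and yields an explicit $L^1$ kernel bound; your Fourier-series route is more modular and extends readily to multilinear settings or to symbols given only on dyadic shells. Either way the constants are uniform in $k_1,k_2$, which is the essential point.
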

\begin{proof}
We can write
\[T_m(P_{k_1}f,P_{k_2}g)(x)=\int K(x-y,x-y')f(y)g(y')dydy'\]
where the kernel
\[K(x,y)=\int m(\xi,\eta)\chi_{k_1}(\xi)\chi_{k_2}(\eta)e^{ix\xi+iy\eta}d\xi d\eta.\]
From the assumption on $m$, and integration by parts, we get a
pointwise bound
\[|K(x,y)|\lec
2^{k_1n}(1+|2^{k_1}x|)^{-n-1}2^{k_2n}(1+|2^{k_2}y|)^{-n-1}.\] Thus
the lemma follows from H\"older's inequality and Minkowski's inequality.
\end{proof}

\begin{lem} For any $N$ and $u$ we have
\EQ{
 \pt\|\LR{D}\Om(N,u)\|_{L^2_t\dot B^{1/4+\e}_{q(\e)}}
  \lec \|N\|_{L^\I_t L^2_x}\|u\|_{L^2_t H^1_6},
 \pr\|D\tilde\Om(u,u)\|_{L^2_t \dot B^{-1/4-\e}_{q(-\e)}}\lec \|u\|_{L^2_tL^6_x}\|u\|_{L^\I_tL^2_x}.}
\end{lem}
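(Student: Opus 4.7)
The plan is to estimate each frequency-localized piece $P_k\Om(N,u)$ (respectively $P_kD\tilde\Om(u,u)$) via the Coifman--Meyer-type bilinear multiplier bound of Lemma \ref{lem:bimult}, combined with Bernstein. The crucial algebraic observation is already made in Section~2: on the support of $\cP_{XL}$ (or $\cP_{XL+LX}$) with output frequency $\sim 2^k$ one has $|\om|,|\tilde\om|\sim 2^{2k}+\al 2^k$, so the natural normalization is $M_k:=(2^{2k}+\al 2^k)^{-1}$. A direct derivative computation shows that $\cP_{XL}\om^{-1}/M_k$, viewed in the parameterization $(\xi-\eta,\eta)$ of Lemma \ref{lem:bimult}, satisfies the Coifman--Meyer bounds uniformly in $k$; the only mildly delicate case is the $\eta$-derivative in the low-frequency regime $2^k\ll\al$, which works thanks to the spectral gap $|\eta|\le 2^{k-5}\ll\al$. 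The same applies to $\tilde\om^{-1}$ on $\cP_{XL+LX}$ by symmetry between $XL$ and $LX$ (the conjugation in $\tilde\Om$ is immaterial at the $L^p$-level). Lemma \ref{lem:bimult} then gives bilinear $L^p\times L^q\to L^r$ bounds for our operators with an extra factor $M_k$.

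For the first estimate I would place $N$ in $L^2_x$ and use the Sobolev embedding $H^1_6\hookrightarrow L^\I_x$ on $\R^3$ to put $u$ in $L^\I_x$. Applying Lemma \ref{lem:bimult} with $(p,q,r)=(q(\e),\I,q(\e))$, followed by Bernstein on $N$, gives at each time $t$ and each $k$
\[
 2^{k(1/4+\e)}\|P_k\LR{D}\Om(N,u)(t)\|_{L^{q(\e)}_x}\lec \bigl(\LR{2^k}\cdot 2^k\cdot M_k\bigr)\|P_kN(t)\|_{L^2_x}\|u(t)\|_{L^\I_x}.
\]
The parenthesized coefficient is $\lec 1$ uniformly in $k$, as one sees by cases: for $k\ge\log_2\al$ it equals $\LR{2^k}\cdot 2^{-k}$ (bounded since $k\ge\log_2\al$), and for $k<\log_2\al$ it equals $\LR{2^k}/\al$ (bounded for any fixed $\al>0$). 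Squaring in $k$ and using Plancherel $\sum_k\|P_kN(t)\|_{L^2_x}^2=\|N(t)\|_{L^2_x}^2$ extracts $N$, and H\"older in $t$ together with $L^2_tH^1_6\subset L^2_tL^\I_x$ closes the estimate.

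The second estimate follows by the same template after splitting $\cP_{XL+LX}=\cP_{XL}+\cP_{LX}$; the two pieces are symmetric. On the $XL$ part, Lemma \ref{lem:bimult} with $(p,q,r)=(2,6,3/2)$ yields
\[
 \|\tilde\Om(P_ku,P_{\le k-5}u)(t)\|_{L^{3/2}_x}\lec M_k\|P_ku(t)\|_{L^2_x}\|u(t)\|_{L^6_x},
\]
and Bernstein raises this to $L^{q(-\e)}_x$ at cost $2^{k(5/4+\e)}$; combined with the $D$ (cost $2^k$) and the Besov weight $2^{-k(1/4+\e)}$, the net $k$-dependent factor is $2^{2k}M_k$, which is again $\lec 1$ by the same two-case check. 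Plancherel in $k$ and H\"older in $t$ then deliver the bound $\|u\|_{L^\I_tL^2_x}\|u\|_{L^2_tL^6_x}$. The main obstacle is the uniform Coifman--Meyer verification of $\om^{-1}$ across the two frequency regimes; once that is in place, the rest is dyadic bookkeeping arranged so that the gain from $M_k$ exactly absorbs the Bernstein loss.
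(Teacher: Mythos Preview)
Your argument is correct and follows essentially the same approach as the paper: both verify that the normalized symbol $\cP_{XL}\om^{-1}/M_k$ satisfies the hypotheses of Lemma~\ref{lem:bimult}, apply that lemma dyadically, and then balance exponents with Bernstein/Sobolev. The only cosmetic difference is that the paper first embeds the target Besov space into $L^2$ (using $\dot H^1\hookrightarrow\dot B^{1/4+\e}_{q(\e)}$ and $\dot H^{1/2}\hookrightarrow\dot B^{-1/4-\e}_{q(-\e)}$) and then applies Lemma~\ref{lem:bimult} with $(p,q,r)=(2,\I,2)$, whereas you keep the output in $L^{q(\pm\e)}$ (or $L^{3/2}$) and shift the Bernstein step to an input or to the output; the net powers of $2^k$ match in both routes.
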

\begin{proof}
For the first inequality, it suffices to prove
\[\|\LR{D}\Om(N,u)\|_{\dot B^{1/4+\e}_{q(\e)}}
  \lec \|N\|_{L^2_x}\|u\|_{H^1_6}.\]
By Sobolev embedding, we get $\|\LR{D}\Om(N,u)\|_{\dot
B^{1/4+\e}_{q(\e)}} \lec \|D\LR{D}\Om(N,u)\|_{L^2}$. It is easy to
see that $D\LR{D}\Om(N,u)$ is a bilinear multiplier with the symbol
\[m(\xi,\eta)=\frac{|\x+\y|\LR{\xi+\eta} \sum_{2^k\nsim
\alpha}\chi_{\leq k-5}(\eta)\chi_k(\xi)}{-|\xi+\eta|^2+\alpha
|\eta|+|\xi|^2},\] and $m$ satisfies the condition in Lemma
\ref{lem:bimult}. Thus applying dyadic decomposition and Lemma
\ref{lem:bimult}, we get
\begin{align*}
\|D\LR{D}\Om(N,u)\|_{L^2}&\lec (\sum_{k_2}\norm{\sum_{k_1\leq k_2-5}D\jb{D}\Om(P_{k_2}N,P_{k_1}u)}_2^2)^{1/2}\\
&\lec (\sum_{k_2}(\sum_{k_1\leq
k_2-5}\norm{P_{k_2}N}_{L^2}\norm{P_{k_1}u}_\I)^2)^{1/2}\lec
\|N\|_{L^2_x}\|u\|_{H^1_6}
\end{align*}
where we used $\sum_{k_1\leq k_2-5}\norm{P_{k_1}u}_\I\lec
\sum_{k_1\leq k_2-5}2^{k_1/2}\norm{P_{k_1}u}_6\leq \|u\|_{H^1_6}$.

Similarly, for the second inequality, by Sobolev embedding we get
\[\|D\tilde\Om(u,u)\|_{\dot B^{-1/4-\e}_{q(-\e)}}
  \lec \|{D}^{3/2}\tilde\Om(u,u)\|_{L^2}\]
and $D^2\tilde\Om$ behaves similarly to $D\jb{D}\Om$. Then applying
dyadic decomposition and Lemma \ref{lem:bimult}, we get
\begin{align*}
\|D^{3/2}\tilde\Om(u,u)\|_{L^2}&\lec (\sum_{k_2}\norm{\sum_{k_1\leq
k_2-5}D^{3/2}\tilde\Om(P_{k_2}u,P_{k_1}u)}_2^2)^{1/2}\\
&\lec (\sum_{k_2}(\sum_{k_1\leq
k_2-5}2^{-k_2/2}\norm{P_{k_2}u}_{L^2}\norm{P_{k_1}u}_\I)^2)^{1/2}\lec
\|u\|_{L^2_x}\|u\|_{L^6_x}.
\end{align*}
Thus we finish the proof of the lemma.
\end{proof}

\subsection{Cubic terms}
Finally, we deal with the cubic terms.
\begin{lem} For any $N$ and $u$ we have
\begin{align*}
\|\Om(D|u|^2,u)\|_{L^1_tH^1_x}&\lec
\|\jb{D}u\|_{L^2_tL^6_x}^2\|u\|_{L^\I_tL^2_x},\\
\|\LR{D}\Om(N,Nu)\|_{L^2_t\dot B^0_{6/5,2}}&\lec
  \|\jb{D}u\|_{L^2_tL^6_x}\|N\|_{L^\I_tL^2_x}^2,\\
 \|D\tilde\Om(Nu,u)\|_{L^1_tL^2_x}&\lec
\|\jb{D}u\|_{L^2_tL^6_x}^2\|N\|_{L^\I_tL^2_x}.
\end{align*}
\end{lem}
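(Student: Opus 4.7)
The plan is to follow the two-step template of the previous lemmas: recognize each cubic term as a bounded Coifman--Meyer bilinear multiplier via the analysis of the resonance function, then apply Lemma~\ref{lem:bimult} with H\"older's inequality and the Sobolev embedding. The basic input is the same as for the bilinear and boundary terms: on the support of $\cP_{XL}$ one has $|\om|\sim|\x|^2+\al|\x|$ and $|\x-\y|\sim|\x|$. I would first check that the effective symbol of $\LR{D}\Om(D\cdot,\cdot)$, namely $\LR{\x}|\x-\y|\om^{-1}\cP_{XL}$, and the symbol of $D\tilde\Om$, namely $|\x|\cP_{XL+LX}/(|\x-\y|^2-|\y|^2-\al|\x|)$, are both uniformly bounded and satisfy the Coifman--Meyer derivative estimates $|\p_\x^a\p_\y^b m|\lec|\x|^{-|a|}|\y|^{-|b|}$; this is a routine mimic of the verification in the proof of the preceding lemma.

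With this in hand, the first estimate follows from the factorization $\LR{D}\Om(D|u|^2,u)=T_m(|u|^2,u)$ for a bounded CM multiplier $T_m$, combined with Lemma~\ref{lem:bimult} in the H\"older exponents $L^\I_x\times L^2_x\to L^2_x$, giving
\[\|\LR{D}\Om(D|u|^2,u)\|_{L^2_x}\lec \|u\|_{L^\I_x}^2\|u\|_{L^2_x}.\]
Time H\"older with the split $L^1_t=L^2_tL^2_tL^\I_t$, together with the Sobolev embedding $H^1_6\hookrightarrow L^\I$ on $\R^3$ (which furnishes $\|u\|_{L^2_tL^\I_x}\lec\|\jb{D}u\|_{L^2_tL^6_x}$), then closes the first bound.

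For the second estimate I would work dyadically in the output Littlewood--Paley frequency $2^k$, which is equivalent to the high-frequency side $P_kN$. Lemma~\ref{lem:bimult} and the Sobolev embedding $\dot H^1\hookrightarrow L^6$ (used to bound $\|D^{-1}P_kN\|_{L^6}\lec\|P_kN\|_{L^2}$) give
\[\|P_k\LR{D}\Om(N,Nu)\|_{L^{6/5}_x}\lec \|D^{-1}P_kN\|_{L^6_x}\|Nu\|_{L^{3/2}_x}\lec \|P_kN\|_{L^2_x}\|N\|_{L^2_x}\|u\|_{L^6_x},\]
where H\"older $\|Nu\|_{L^{3/2}}\lec\|N\|_{L^2}\|u\|_{L^6}$ is used in the last step. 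Square-summation in $k$ converts the first factor to $\|N\|_{L^2_x}$, and time H\"older $L^2_t=L^\I_tL^\I_tL^2_t$ together with $\|u\|_{L^2_tL^6_x}\lec\|\jb{D}u\|_{L^2_tL^6_x}$ yield the claim. The third estimate is the simplest: since $D\tilde\Om$ is itself a bounded CM multiplier, Lemma~\ref{lem:bimult} with exponents $L^2_x\times L^\I_x\to L^2_x$ combined with H\"older $\|Nu\|_{L^2}\lec\|N\|_{L^2}\|u\|_{L^\I}$ gives $\|D\tilde\Om(Nu,u)\|_{L^2_x}\lec\|N\|_{L^2_x}\|u\|_{L^\I_x}^2$; time H\"older $L^1_t=L^\I_tL^2_tL^2_t$ and $H^1_6\hookrightarrow L^\I$ then close the bound.

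The main technical obstacle is the careful verification of the Coifman--Meyer derivative bounds on the two symbols, particularly in the regime $|\x|\ll 1$ where the gain from $\om^{-1}$ is only $(\al|\x|)^{-1}$ rather than the sharper $|\x|^{-2}$ available at high frequency; the bounded-derivative conditions nevertheless hold there thanks to the excluded neighborhood $|\x-\y|\sim\al$ built into $\cP_{XL}$, by a direct mimic of the computation at the end of the preceding lemma.
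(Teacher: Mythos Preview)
Your overall strategy matches the paper's, but there is a genuine gap in the first estimate that comes from how you invoke Lemma~\ref{lem:bimult}. That lemma only bounds $T_m(P_{k_1}f,P_{k_2}g)$ for \emph{dyadically localized} inputs; it does not assert the global $L^p\times L^q\to L^r$ bound you use. Once you decompose, the output of $\LR{D}\Om(D|u|^2,u)$ at frequency $\sim 2^{k}$ involves $P_{k}|u|^2$ (high) against $P_{\le k-5}u$ (low). With your placement (high factor in $L^\infty$, low factor in $L^2$), the $\ell^2$-sum over $k$ produces $\||u|^2\|_{\dot B^0_{\infty,2}}$, not $\||u|^2\|_{L^\infty}$, and the low-frequency sum $\sum_{k_1\le k-5}\|P_{k_1}u\|_{L^2}$ is $\|u\|_{\dot B^0_{2,1}}$, not $\|u\|_{L^2}$; neither is controlled by your stated right-hand side (recall $L^\infty\not\hookrightarrow\dot B^0_{\infty,2}$). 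The paper avoids this by reversing the exponents: it puts the high-frequency input $P_{k}|u|^2$ in $L^2$, so the $\ell^2$ over $k$ is just $\||u|^2\|_{L^2}$ by orthogonality, and it puts the low-frequency $u$ in $L^\infty$, controlling the $\ell^1$-sum $\sum_{k_1}\|P_{k_1}u\|_{L^\infty}$ by $\|\jb{D}u\|_{L^6}$ via Bernstein. This is the point you are missing.

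Your second and third estimates are much closer to the paper's and essentially correct, but the same looseness appears: in the third estimate the intermediate bound should read $\|u\|_{\dot B^0_{\infty,1}}$ rather than $\|u\|_{L^\infty}$ for the low-frequency factor (both are $\lec\|\jb{D}u\|_{L^6}$, so the conclusion survives), and in the second you implicitly use $\|P_{\le k-5}(Nu)\|_{L^{3/2}}\lec\|Nu\|_{L^{3/2}}$ as a single block, which requires a mild variant of Lemma~\ref{lem:bimult} with $P_{\le k}$ in place of $P_{k}$ on one input. The paper instead keeps the dyadic sum explicit there too, using the extra $2^{-k_2}$ from the symbol together with Bernstein $\|P_{k_1}(Nu)\|_{L^3}\lec 2^{k_1}\|Nu\|_{L^{3/2}}$ to make $\sum_{k_1\le k_2-5}2^{k_1}\sim 2^{k_2}$ cancel. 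Either route works for those two terms; the first estimate is where your argument actually breaks.
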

\begin{proof}
As in the proof of previous lemma, applying dyadic decomposition and
Lemma \ref{lem:bimult}, we get
\begin{align*}
\|\Om(D|u|^2,u)\|_{H^1_x}&\lec(\sum_{k_2}\norm{\sum_{k_1\leq k_2-5}\jb{D}\Om(P_{k_2}D|u|^2,P_{k_1}u)}_2^2)^{1/2}\\
&\lec (\sum_{k_2}(\sum_{k_1\leq
k_2-5}\norm{P_{k_2}|u|^2}_{L^2}\norm{P_{k_1}u}_{L^\I})^2)^{1/2}\lec
\|\jb{D}u\|_{L^6_x}^2\|u\|_{L^2_x}.
\end{align*}
Similarly, for the second inequality, we have
\begin{align*}
\|\LR{D}\Om(N,Nu)\|_{\dot B^0_{6/5}}&\lec(\sum_{k_2}\norm{\sum_{k_1\leq k_2-5}\jb{D}\Om(P_{k_2}N,P_{k_1}(Nu))}_{6/5}^2)^{1/2}\\
&\lec (\sum_{k_2}(\sum_{k_1\leq
k_2-5}2^{-k_2}\norm{P_{k_2}N}_{L^2}\norm{P_{k_1}(Nu)}_{L^3_x})^2)^{1/2}
 \\&\lec (\sum_{k_2}(\sum_{k_1\leq
k_2-5}2^{-k_2+k_1}\norm{P_{k_2}N}_{L^2}\|N\|_{L^2_x}\|u\|_{L^6_x})^2)^{1/2}
 \\&\lec \|u\|_{L^6_x}\|N\|_{L^2_x}^2,
\end{align*}
and for the last inequality, we have 
\EQ{
 \|D\tilde\Om(Nu,u)\|_{L^2_x} 
 \pt\lec (\sum_{k_2}\norm{\sum_{k_1\leq k_2-5}2^{k_2}\ti\Om(P_{k_2}(Nu),P_{k_1}u)}_2^2)^{1/2}
 \pr\lec (\sum_{k_2}(\sum_{k_1\leq k_2-5}\LR{2^{k_2}}^{-1}\norm{P_{k_2}(Nu)}_{L^2}\norm{P_{k_1}u}_{L^\I})^2)^{1/2}
 \pr\lec \|\jb{D}u\|_{L^6_x}^2\|N\|_{L^2_x}.}
Using H\"older in $t$, we obtain the claimed inequalities. 
\end{proof}

\section{Proof of Theorem \ref{thm}}

Now we are ready to prove Theorem \ref{thm}. The main ingredients
are the estimates obtained in the previous section. For any
$(u_0,n_0,n_1)\in H^1_r(\R^3)\times L^2_r(\R^3)\times \dot
H^{-1}_r(\R^3)$, we define an operator $\Phi_{u_0,n_0,n_1}(u,N)$ by
the right-hand side of \eqref{eq:intu}-\eqref{eq:intN}. Our
resolution space is
\[X_\eta=\{(u,N): \norm{(u,N)}_X=\norm{\jb{D}u}_{L^\I_tL^2_x \cap L^2\dot B^{1/4+\e}_{q(\e),2}}+\norm{N}_{L^\I_tL^2_x \cap L^2_t\dot B^{-1/4-\e}_{q(-\e),2}}\leq \eta\}\]
endowed with the norm metric $\norm{\cdot}_X$.

We will show that $\Phi_{u_0,n_0,n_1}:X_\eta\to X_\eta$ is a
contraction mapping, provided that $\eta\ll 1$ and $(u_0,n_0,n_1)$
are sufficiently small. By the estimates in the previous section, we
have for any $(u,N)\in X_\eta$
\begin{align*}
\norm{\Phi_{u_0,n_0,n_1}(u,N)}_X&\lec
\norm{u_0}_{H^1}+\norm{n_0}_{L^2}+\norm{n_1}_{\dot
H^{-1}}+(\norm{u_0}_{H^1}+\norm{n_0}_{L^2}+\norm{n_1}_{\dot
H^{-1}})^2\\
&+\norm{(u,N)}_X^2+\norm{(u,N)}_X^3\leq \eta
\end{align*}
if $\e_0=\norm{u_0}_{H^1}+\norm{n_0}_{L^2}+\norm{n_1}_{\dot
H^{-1}}\ll 1$, and we set $\eta=C\e_0$. Similarly, we can prove
$\Phi_{u_0,n_0,n_1}:X_\eta\to X_\eta$ is a contraction mapping. Our
estimates are time global, therefore Theorem \ref{thm} follows
immediately. The quadratic terms without time integral tend to $0$ strongly in the energy space as $|t|\to\I$, so they do not contribute to the scattering states. 

\subsection*{Acknowledgment}
This work started when the second author was visiting the Beijing
International Center for Mathematical Research. He would like to
thank Professor Baoxiang Wang for his kind invitation and
hospitality. Z. Guo is supported in part by NNSF of China (No.
11001003).


\begin{thebibliography}{99}
\bibitem{BeHe} I. Bejenaru and S. Herr, {\it Convolutions of singular measures and applications to the Zakharov system.} J. Funct. Anal. {\bf 261} (2011), no. 2, 478--506. 
\bibitem{BHHT} I. Bejenaru, S. Herr, J. Holmer, and D. Tataru, {\it On the 2D
Zakharov system with $L^2$ Schr\"odinger data.} Nonlinearity {\bf 22} 
(2009), no. 5, 1063--1089.
\bibitem{BoCo} J. Bourgain and J. Colliander, {\it On wellposedness of the Zakharov system.} Internat. Math. Res. Notices {\bf 1996}, no. 11, 515--546.
\bibitem{GV} J. Ginibre and G. Velo, {\it Scattering theory for the Zakharov system.} Hokkaido Math. J. {\bf 35} (2006), no. 4, 865--892.
\bibitem{GTV} J. Ginibre, Y. Tsutsumi and G. Velo, {\it On the Cauchy problem for the Zakharov system.} J. Funct. Anal. {\bf 151} (1997), no. 2, 384--436.
\bibitem{GuoWang} Z. Guo and Y. Wang, {\it Improved Strichartz estimates for a class of dispersive equations in the radial case and their applications to nonlinear Schr\"odinger and wave equation.} arXiv:1007.4299. 
\bibitem{KPV} C. Kenig, G. Ponce and L. Vega, {\it On the Zakharov and Zakharov-Schulman systems.} J. Funct. Anal. {\bf 127} (1995), no. 1, 204--234.

\bibitem{Kishi} N. Kishimoto, {\it Local well-posedness for the Zakharov system on multidimensional torus.} preprint, arXiv:1109.3527.
\bibitem{MN} N. Masmoudi and K. Nakanishi, {\it Energy convergence for singular limits of Zakharov type systems.} Invent. Math. {\bf 172} (2008), no. 3, 535--583.
\bibitem{Merle} F. Merle, {\it Blow-up results of virial type for Zakharov equations.} Comm. Math. Phys., {\bf 175} (1996), pp. 433--455.
\bibitem{OT} T. Ozawa and Y. Tsutsumi, {\it The nonlinear Schr\"odinger limit and the initial layer of the Zakharov equations.} Differ. Integral Equ. {\bf 5} (1992) no. 4, 721--745.
\bibitem{OT2} T. Ozawa and Y. Tsutsumi, {\it Global existence and asymptotic behavior of solutions for the Zakharov equations in three-dimensions space.} Adv. Math. Sci. Appl. {\bf 3} (Special Issue) (1993/94) 301--334.
\bibitem{OTT} T. Ozawa, K. Tsutaya and Y. Tsutsumi, {\it Well-posedness in energy space for the Cauchy problem of the Klein--Gordon--Zakharov equations with different propagation speeds in three space dimensions.} Math. Ann. {\bf 313} (1999) no. 1, 127--140.
\bibitem{SW}S. Schochet and M. Weinstein, {\it The nonlinear Schr\"odinger limit of the Zakharov equations governing Langmuir turbulence.} Commun. Math. Phys. {\bf 106} (1986), no.4, 569--580.
\bibitem{Shatah} J. Shatah, {\it Normal forms and quadratic nonlinear Klein-Gordon equations.} Comm. Pure Appl. Math. {\bf 38} (1985), no. 5, 685--696.
\bibitem{Shimo}A. Shimomura, {\it Scattering theory for Zakharov equations in three-dimensional space with large data.} Commun. Contemp. Math. {\bf 6} (2004), no. 6, 881--899.
\bibitem{Takaoka} H. Takaoka, {\it Well-posedness for the Zakharov system with the periodic boundary condition.} Differential Integral Equations {\bf 12} (1999), no. 6, 789--810.
\bibitem{Zak} V. E. Zakharov, {\it Collapse of Langmuir waves.} Sov. Phys. JETP {\bf 35} (1972), 908--914.
\end{thebibliography}
\end{document}